\newtheorem{theorem}{Theorem}
\theoremstyle{plain}
\newtheorem{corollary}{Corollary}
\newtheorem{lemma}{Lemma}
\numberwithin{equation}{section}
\begin{document}
\title[Inequalities for Quadratic Operator Perspective]{Inequalities for
Quadratic Operator Perspective of Convex Functions and Bounded Linear
Operators in Hilbert Spaces}
\author{S. S. Dragomir$^{1,2}$}
\address{$^{1}$Mathematics, College of Engineering \& Science\\
Victoria University, PO Box 14428\\
Melbourne City, MC 8001, Australia.}
\email{sever.dragomir@vu.edu.au}
\urladdr{http://rgmia.org/dragomir}
\address{$^{2}$DST-NRF Centre of Excellence \\
in the Mathematical and Statistical Sciences, School of Computer Science \&
Applied Mathematics, University of the Witwatersrand, Private Bag 3,
Johannesburg 2050, South Africa}
\subjclass{47A63, 47A30, 15A60, 26D15, 26D10}
\keywords{Operator perspective, Convex functions, Operator inequalities,
Arithmetic mean-Geometric mean operator inequality, Relative operator
entropy.}

\begin{abstract}
In this paper we introduce the concept of \textit{quadratic operator
perspective} for a continuos function $\Phi $ defined on the positive
semi-axis of real numbers, the invertible operator $T$ and operator $V$ on a
Hilbert space by \ 
\begin{equation*}
\circledcirc _{\Phi }\left( V,T\right) :=T^{\ast }\Phi \left( \left\vert
VT^{-1}\right\vert ^{2}\right) T.
\end{equation*}%
This generalize the \textit{quadratic} \textit{weighted operator geometric
mean }of $\left( T,V\right) $ defined by%
\begin{equation*}
T\circledS _{\nu }V:=\left\vert \left\vert VT^{-1}\right\vert ^{\nu
}T\right\vert ^{2}
\end{equation*}%
for $\nu \in \left[ 0,1\right] $ and the \textit{quadratic} \textit{relative
operator entropy defined }by%
\begin{equation*}
\odot \left( T|V\right) :=T^{\ast }\ln \left( \left\vert VT^{-1}\right\vert
^{2}\right) T.
\end{equation*}%
Some inequalities for this perspective of convex functions are established.
Applications for quadratic weighted operator geometric mean and quadratic
relative operator entropy are also provided.
\end{abstract}

\maketitle

\section{Introduction}

If $\Phi :I\rightarrow \mathbb{R}$ is a convex function on the real interval 
$I$ and $T$ is a selfadjoint operator on the complex Hilbert space $\left(
H;\left\langle \cdot ,\cdot \right\rangle \right) $ with the spectrum $%
\limfunc{Sp}\left( T\right) \subset \mathring{I},$ the interior of $I,$ then
we have the following Jensen's type inequality 
\begin{equation}
\left\langle \Phi \left( T\right) x,x\right\rangle \geq \Phi \left(
\left\langle Tx,x\right\rangle \right)  \label{Jen}
\end{equation}%
for any $x\in H$ with $\left\Vert x\right\Vert =1.$

For various Jensen's type inequalities for functions of selfadjoint
operators, see the recent monograph \cite{SSDJen} and the references therein.

In the recent paper \cite{SSDPC} we showed amongst others that if $A$ is a
positive invertible operator and $B$ is a selfadjoint operator such that $%
\limfunc{Sp}\left( A^{-1/2}BA^{-1/2}\right) \subset \mathring{I},$ then 
\begin{equation}
\frac{\left\langle A^{1/2}\Phi \left( A^{-1/2}BA^{-1/2}\right)
A^{1/2}x,x\right\rangle }{\left\langle Ax,x\right\rangle }\geq \Phi \left( 
\frac{\left\langle Bx,x\right\rangle }{\left\langle Ax,x\right\rangle }%
\right) ,  \label{Jen1}
\end{equation}%
for any $x\in H,$ $x\neq 0.$ This result can be reformulated in terms of
perspective as follows.

Let $\Phi $ be a continuous function defined on the interval $I$ of real
numbers, $B$ a selfadjoint operator on the Hilbert space $H$ and $A$ a
positive invertible operator on $H.$ Assume that the spectrum $\limfunc{Sp}%
\left( A^{-1/2}BA^{-1/2}\right) \subset \mathring{I}.$ Then by using the
continuous functional calculus, we can define the \textit{perspective} $%
\mathcal{P}_{\Phi }\left( B,A\right) $ by setting 
\begin{equation*}
\mathcal{P}_{\Phi }\left( B,A\right) :=A^{1/2}\Phi \left(
A^{-1/2}BA^{-1/2}\right) A^{1/2}.
\end{equation*}%
If $A$ and $B$ are commutative, then 
\begin{equation*}
\mathcal{P}_{\Phi }\left( B,A\right) =A\Phi \left( BA^{-1}\right)
\end{equation*}%
provided $\limfunc{Sp}\left( BA^{-1}\right) \subset \mathring{I}.$

By using the perspective notation, we have by (\ref{Jen1}) that 
\begin{equation}
\frac{\left\langle \mathcal{P}_{\Phi }\left( B,A\right) x,x\right\rangle }{%
\left\langle Ax,x\right\rangle }\geq \Phi \left( \frac{\left\langle
Bx,x\right\rangle }{\left\langle Ax,x\right\rangle }\right) ,  \label{Jen2}
\end{equation}%
for any $x\in H$ with $\left\Vert x\right\Vert =1.$

It is well known that (see \cite{E} and \cite{ENG} or \cite{EH}), if $\Phi $
is an \textit{operator convex function} defined in the positive half-line,
then the mapping 
\begin{equation*}
\left( B,A\right) \rightarrow \mathcal{P}_{\Phi }\left( B,A\right)
\end{equation*}%
\textit{defined in pairs of positive definite operators, is convex.}

Assume that $A,$ $B$ are positive operators on a complex Hilbert space $%
\left( H,\left\langle \cdot ,\cdot \right\rangle \right) .$ The \textit{%
weighted operator arithmetic mean }for the pair $\left( A,B\right) $ is
defined by 
\begin{equation*}
A\nabla _{\nu }B:=\left( 1-\nu \right) A+\nu B.
\end{equation*}%
In 1980, Kubo \& Ando, \cite{KA} introduced the \textit{weighted operator
geometric mean }for the pair $\left( A,B\right) $ with $A$ positive and
invertible and $B$\textit{\ }positive by%
\begin{equation*}
A\sharp _{\nu }B:=A^{1/2}\left( A^{-1/2}BA^{-1/2}\right) ^{\nu }A^{1/2}.
\end{equation*}%
If $A,$ $B$ are positive invertible operators then we can also consider the 
\textit{weighted operator harmonic mean }defined by (see for instance \cite%
{KA}) 
\begin{equation*}
A!_{\nu }B:=\left( \left( 1-\nu \right) A^{-1}+\nu B^{-1}\right) ^{-1}.
\end{equation*}

We have the following fundamental operator means inequalities, or Young's
inequalities%
\begin{equation}
A!_{\nu }B\leq A\sharp _{\nu }B\leq A\nabla _{\nu }B,\text{ }\nu \in \left[
0,1\right]  \label{KA}
\end{equation}%
for any $A,$ $B$ positive invertible operators. For $\nu =\frac{1}{2},$ we
denote the above means by $A\nabla B,$ $A\sharp B$ and $A!B.$

For recent results on operator Young inequality see \cite{F}-\cite{KM2}, 
\cite{LWZ} and \cite{T}-\cite{ZSF}.

We denote by $\mathcal{B}^{-1}\left( H\right) $ the class of all bounded
linear invertible operators on $H.$ For $T\in \mathcal{B}^{-1}\left(
H\right) $ and $V\in \mathcal{B}\left( H\right) $ we define the \textit{%
quadratic} \textit{weighted operator geometric mean }of $\left( T,V\right) $
by \cite{SSDQ} 
\begin{equation}
T\circledS _{\nu }V:=\left\vert \left\vert VT^{-1}\right\vert ^{\nu
}T\right\vert ^{2}  \label{S}
\end{equation}%
for $\nu \geq 0.$ For $V\in \mathcal{B}^{-1}\left( H\right) $ we can also
extend the definition (\ref{S}) for $\nu <0.$

By the definition of operator modulus, i.e., we recall that $\left\vert
U\right\vert :=\sqrt{U^{\ast }U},$ $U\in \mathcal{B}\left( H\right) ,$ we
also have%
\begin{equation}
T\circledS _{\nu }V=T^{\ast }\left\vert VT^{-1}\right\vert ^{2\nu }T=T^{\ast
}\left( \left( T^{\ast }\right) ^{-1}V^{\ast }VT^{-1}\right) ^{\nu }T
\label{e.1.5}
\end{equation}%
for any $T\in \mathcal{B}^{-1}\left( H\right) $ and $V\in \mathcal{B}\left(
H\right) .$ For $\nu =\frac{1}{2}$ we denote 
\begin{equation*}
T\circledS V:=\left\vert \left\vert VT^{-1}\right\vert ^{1/2}T\right\vert
^{2}=T^{\ast }\left\vert VT^{-1}\right\vert T=T^{\ast }\left( \left( T^{\ast
}\right) ^{-1}V^{\ast }VT^{-1}\right) ^{1/2}T.
\end{equation*}

If we take in (\ref{S}) $T=A^{1/2}\in \mathcal{B}^{-1}\left( H\right) $ and $%
V=B^{1/2}$ with $A$ a positive invertible operator and $B$ a nonnegative
operator, then we get 
\begin{equation}
A^{1/2}\circledS _{\nu }B^{1/2}=A\sharp _{\nu }B\text{ for }\nu \in \left[
0,1\right] .  \label{e.1.4}
\end{equation}

We have the following fundamental inequalities \cite{SSDQ}:%
\begin{equation}
\left\vert T\right\vert ^{2}\nabla _{\nu }\left\vert V\right\vert ^{2}\geq
T\circledS _{\nu }V  \label{e.1.6}
\end{equation}%
for any $T\in \mathcal{B}^{-1}\left( H\right) $, $V\in \mathcal{B}\left(
H\right) $ and $\nu \in \left[ 0,1\right] .$ If $T,$ $V\in \mathcal{B}%
^{-1}\left( H\right) ,$ then for $\nu \in \left[ 0,1\right] $ we also have%
\begin{equation}
T\circledS _{\nu }V\geq \left\vert T\right\vert ^{2}!_{\nu }\left\vert
V\right\vert ^{2}.  \label{e.1.7}
\end{equation}%
In particular, we have%
\begin{equation}
\left\vert T\right\vert ^{2}\nabla \left\vert V\right\vert ^{2}\geq
T\circledS V\geq \left\vert T\right\vert ^{2}!\left\vert V\right\vert ^{2}
\label{e.1.8}
\end{equation}%
for $T,$ $V\in \mathcal{B}^{-1}\left( H\right) .$

We have the following identities \cite{SSDK} as well%
\begin{equation}
\left( T\circledS _{\nu }V\right) ^{-1}=\left( T^{\ast }\right)
^{-1}\circledS _{\nu }\left( V^{\ast }\right) ^{-1}\text{ and }T\circledS
_{1-t}V=V\circledS _{t}T  \label{e.1.9}
\end{equation}%
for any $T,$ $V\in \mathcal{B}^{-1}\left( H\right) $ and $\nu \in \left[ 0,1%
\right] .$

Kamei and Fujii \cite{FK1}, \cite{FK2} defined the \textit{relative operator
entropy} $S\left( A|B\right) ,$ for positive invertible operators $A$ and $%
B, $ by 
\begin{equation}
S\left( A|B\right) :=A^{\frac{1}{2}}\left( \ln \left( A^{-\frac{1}{2}}BA^{-%
\frac{1}{2}}\right) \right) A^{\frac{1}{2}},  \label{e.1}
\end{equation}%
which is a relative version of the operator entropy considered by
Nakamura-Umegaki \cite{NU}.

For some recent results on relative operator entropy see \cite{SSDE1}-\cite%
{SSDE2}, \cite{K}-\cite{KN} and \cite{MMM}-\cite{Ni}.

Consider the scalar function $T_{t}:\left( 0,\infty \right) \rightarrow 
\mathbb{R}$ defined for $t\neq 0$ by%
\begin{equation}
T_{t}\left( x\right) :=\frac{x^{t}-1}{t}.  \label{Tt}
\end{equation}%
We have%
\begin{equation}
T_{-t}\left( x\right) =\frac{1-x^{-t}}{t}=\frac{x^{t}-1}{tx^{t}}=T_{t}\left(
x\right) x^{-t}.  \label{e.1.6.a}
\end{equation}

For $T,$ $V\in \mathcal{B}^{-1}\left( H\right) $ and $t>0$ we define the 
\textit{quadratic Tsallis relative operator entropy} by \cite{SSDRE} 
\begin{align}
\circledcirc _{t}\left( T|V\right) & :=T^{\ast }T_{t}\left( \left\vert
VT^{-1}\right\vert ^{2}\right) T=T^{\ast }\frac{\left( \left\vert
VT^{-1}\right\vert ^{2}\right) ^{t}-1}{t}T  \label{TQ} \\
& =\frac{T\circledS _{t}V-\left\vert T\right\vert ^{2}}{t}=\frac{\left\vert
\left\vert VT^{-1}\right\vert ^{t}T\right\vert ^{2}-\left\vert T\right\vert
^{2}}{t}  \notag
\end{align}%
and the \textit{quadratic} \textit{relative operator entropy }by \cite{SSDRE}%
\begin{equation}
\odot \left( T|V\right) :=T^{\ast }\ln \left( \left\vert VT^{-1}\right\vert
^{2}\right) T.  \label{QR}
\end{equation}%
We also have for $t>0$ and $T,$ $V\in \mathcal{B}^{-1}\left( H\right) $ that%
\begin{equation}
\circledcirc _{-t}\left( T|V\right) =T^{\ast }T_{-t}\left( \left\vert
VT^{-1}\right\vert ^{2}\right) T=\circledcirc _{t}\left( T|V\right) \left(
T\circledS _{t}V\right) ^{-1}\left\vert T\right\vert ^{2}.  \label{-TQ}
\end{equation}

We observe that for $T=A^{1/2}\in \mathcal{B}^{-1}\left( H\right) $ and $%
V=B^{1/2}\in \mathcal{B}^{-1}\left( H\right) $ we get the equalities%
\begin{equation*}
\circledcirc _{t}\left( A^{1/2}|B^{1/2}\right) =T_{t}\left( A|B\right) :=%
\frac{A\sharp _{\nu }B-A}{t}\text{ }
\end{equation*}%
and 
\begin{equation*}
\odot \left( A^{1/2}|B^{1/2}\right) =S\left( A|B\right) ,
\end{equation*}%
that show the connection between the extended Tsallis and relative entropies
with the classical concepts defined for positive operators.

The following fundamental inequalities may be stated \cite{SSDRE}:%
\begin{equation}
\circledcirc _{-t}\left( T|V\right) \leq \odot \left( T|V\right) \leq
\circledcirc _{t}\left( T|V\right)   \label{e.1.10}
\end{equation}%
for any $T,$ $V\in \mathcal{B}^{-1}\left( H\right) $ and $t>0.$

In particular, we have%
\begin{equation}
\left( 1_{H}-\left\vert T\right\vert ^{2}\left\vert V\right\vert
^{-2}\right) \left\vert T\right\vert ^{2}\leq \odot \left( T|V\right) \leq
\left\vert V\right\vert ^{2}-\left\vert T\right\vert ^{2},  \label{e.1.11}
\end{equation}%
and%
\begin{equation}
2\left( 1_{H}-\left\vert T\right\vert ^{2}\left( T\circledS V\right)
^{-1}\right) \left\vert T\right\vert ^{2}\leq \odot \left( T|V\right) \leq
2\left( T\circledS V-\left\vert T\right\vert ^{2}\right)  \label{e.2.3}
\end{equation}%
for any $T,$ $V\in \mathcal{B}^{-1}\left( H\right) .$

Let $T\in \mathcal{B}^{-1}\left( H\right) $, $V\in \mathcal{B}\left(
H\right) $ and $I$ an interval of nonnegative numbers. Assume that $\limfunc{%
Sp}\left( \left\vert VT^{-1}\right\vert ^{2}\right) \subset \mathring{I}$
and $\Phi $ is a continuous function defined on the interval $I.$ Then by
using the continuous functional calculus for selfadjoint operators, we can
define the \textit{quadratic} \textit{operator} \textit{perspective }of%
\textit{\ }$T$, $V$ and $\Phi $ by 
\begin{equation}
\circledcirc _{\Phi }\left( V,T\right) :=T^{\ast }\Phi \left( \left\vert
VT^{-1}\right\vert ^{2}\right) T.  \label{Def}
\end{equation}%
If we take in this definition $\Phi \left( x\right) =x^{\nu },$ $x>0,$ $\nu
\neq 0,$ then we recapture the definition of quadratic weighted operator
geometric mean\textit{, }for\textit{\ }$\Phi \left( x\right) =\frac{x^{t}-1}{%
t},$ $t\neq 0,$ $x>0,$ the definition of quadratic Tsallis relative operator
entropy\textit{\ }and for $\Phi \left( x\right) =\ln x,$ $x>0$ the
definition of quadratic relative operator entropy\textit{.}

Motivated by the above facts, we establish in this paper some upper and
lower bounds for the quadratic operator perspective and apply them for the
quadratic operator entropy and geometric mean defined above.

\section{Operator Inequalities for Quadratic Perspectives}

Suppose that $I$ is an interval of real numbers with interior $\mathring{I}$
and $\Phi :I\rightarrow \mathbb{R}$ is a convex function on $I$. Then $\Phi $
is continuous on $\mathring{I}$ and has finite left and right derivatives at
each point of $\mathring{I}$. Moreover, if $t,$ $s\in \mathring{I}$ and $%
t<s, $ then $\Phi _{-}^{\prime }\left( t\right) \leq \Phi _{+}^{\prime
}\left( t\right) \leq \Phi _{-}^{\prime }\left( s\right) \leq \Phi
_{+}^{\prime }\left( s\right) $ which shows that both $\Phi _{-}^{\prime }$
and $\Phi _{+}^{\prime }$ are nondecreasing function on $\mathring{I}$. It
is also known that a convex function must be differentiable except for at
most countably many points.

For a convex function $\Phi :I\rightarrow \mathbb{R}$, the subdifferential
of $\Phi $ denoted by $\partial \Phi $ is the set of all functions $\varphi
:I\rightarrow \left[ -\infty ,\infty \right] $ such that $\varphi \left( 
\mathring{I}\right) \subset \mathbb{R}$ and 
\begin{equation}
\Phi \left( t\right) \geq \Phi \left( a\right) +\left( t-a\right) \varphi
\left( a\right) \text{ for any }t,\text{ }a\in I.  \label{G}
\end{equation}

It is also well known that if $\Phi $ is convex on $I,$ then $\partial \Phi $
is nonempty, $\Phi _{-}^{\prime }$, $\Phi _{+}^{\prime }\in \partial \Phi $
and if $\varphi \in \partial \Phi $, then 
\begin{equation*}
\Phi _{-}^{\prime }\left( t\right) \leq \varphi \left( t\right) \leq \Phi
_{+}^{\prime }\left( t\right) \text{ for any }t\in \mathring{I}\text{.}
\end{equation*}%
In particular, $\varphi $ is a nondecreasing function.

If $\Phi $ is differentiable and convex on $\mathring{I}$, then $\partial
\Phi =\left\{ \Phi ^{\prime }\right\} .$

We need the following simple fact, see also \cite{SSDQ}:

\begin{lemma}
\label{l.4.1}Let $T,$ $V\in \mathcal{B}^{-1}\left( H\right) $ and $%
0<m<M<\infty .$ Then the following statements are equivalent:

(i) The inequality 
\begin{equation}
m\left\Vert Tx\right\Vert \leq \left\Vert Vx\right\Vert \leq M\left\Vert
Tx\right\Vert  \label{mM}
\end{equation}%
holds for any $x\in H;$

(ii) We have the operator inequality%
\begin{equation}
m1_{H}\leq \left\vert VT^{-1}\right\vert \leq M1_{H}.  \label{mM'}
\end{equation}
\end{lemma}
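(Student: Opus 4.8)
The plan is to establish the chain of equivalences
(i) $\Leftrightarrow$ $m^{2}1_{H}\leq \left\vert VT^{-1}\right\vert ^{2}\leq M^{2}1_{H}$ $\Leftrightarrow$ (ii),
the first equivalence coming from a change of variable and a polarization of the norm, the second from the operator monotonicity of the square root. First I would use that, since $T\in \mathcal{B}^{-1}\left( H\right) $, the bounded map $x\mapsto Tx$ is a bijection of $H$ onto itself. Hence, putting $y:=Tx$, so that $x=T^{-1}y$ and $Vx=VT^{-1}y$, the inequality \eqref{mM} is equivalent to
$m\left\Vert y\right\Vert \leq \left\Vert VT^{-1}y\right\Vert \leq M\left\Vert y\right\Vert$ for every $y\in H$. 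It is precisely the surjectivity of $x\mapsto Tx$, guaranteed by the invertibility of $T$, that allows one to replace ``for all $x\in H$'' by ``for all $y\in H$'' here; this is the one place where the hypothesis $T\in\mathcal{B}^{-1}(H)$ is genuinely needed.

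Next I would square the last displayed inequalities and use
$\left\Vert VT^{-1}y\right\Vert ^{2}=\left\langle \left( VT^{-1}\right) ^{\ast }VT^{-1}y,y\right\rangle =\left\langle \left\vert VT^{-1}\right\vert ^{2}y,y\right\rangle$ together with $\left\Vert y\right\Vert ^{2}=\left\langle 1_{H}y,y\right\rangle$. The condition then becomes
$\left\langle m^{2}1_{H}y,y\right\rangle \leq \left\langle \left\vert VT^{-1}\right\vert ^{2}y,y\right\rangle \leq \left\langle M^{2}1_{H}y,y\right\rangle$ for all $y\in H$, which, all operators in sight being selfadjoint, is exactly $m^{2}1_{H}\leq \left\vert VT^{-1}\right\vert ^{2}\leq M^{2}1_{H}$.

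Finally I would relate $\left\vert VT^{-1}\right\vert ^{2}$ to $\left\vert VT^{-1}\right\vert $. Since $t\mapsto \sqrt{t}$ is operator monotone on $\left[ 0,\infty \right) $, the inequality $m^{2}1_{H}\leq \left\vert VT^{-1}\right\vert ^{2}\leq M^{2}1_{H}$ yields \eqref{mM'}; conversely, if $m1_{H}\leq \left\vert VT^{-1}\right\vert \leq M1_{H}$ then $\left\vert VT^{-1}\right\vert ^{2}-m^{2}1_{H}=\left( \left\vert VT^{-1}\right\vert -m1_{H}\right) \left( \left\vert VT^{-1}\right\vert +m1_{H}\right) \geq 0$, as the two factors commute and are both nonnegative, and symmetrically $M^{2}1_{H}-\left\vert VT^{-1}\right\vert ^{2}\geq 0$. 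Equivalently, both directions of this last step follow at once from the spectral mapping theorem, since $\limfunc{Sp}\left( \left\vert VT^{-1}\right\vert ^{2}\right) \subseteq \left[ m^{2},M^{2}\right] $ if and only if $\limfunc{Sp}\left( \left\vert VT^{-1}\right\vert \right) \subseteq \left[ m,M\right] $. Chaining the three steps gives (i) $\Leftrightarrow$ (ii). I do not anticipate any real obstacle: the argument is elementary, the only points worth flagging being the use of invertibility of $T$ in the change of variable and the (routine) square-root passage at the end.
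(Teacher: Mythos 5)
Your proof is correct and follows essentially the same route as the paper's: squaring the norm inequality, rewriting it as a quadratic-form (hence operator-order) inequality for $\left\vert VT^{-1}\right\vert ^{2}$ by exploiting the invertibility of $T$ (your change of variable $y=Tx$ is the vector-level version of the paper's congruence by $T^{-1}$), and finally passing to $\left\vert VT^{-1}\right\vert$ by the square-root step. The only difference is cosmetic: you spell out the operator-monotonicity/spectral-mapping justification of the last step, which the paper leaves implicit.
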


\begin{proof}
The inequality (\ref{mM}) is equivalent to 
\begin{equation*}
m^{2}\left\Vert Tx\right\Vert ^{2}\leq \left\Vert Vx\right\Vert ^{2}\leq
M^{2}\left\Vert Tx\right\Vert ^{2}
\end{equation*}%
for any $x\in H,$ namely%
\begin{equation*}
m^{2}\left\langle T^{\ast }Tx,x\right\rangle \leq \left\langle V^{\ast
}Vx,x\right\rangle \leq M^{2}\left\langle T^{\ast }Tx,x\right\rangle
\end{equation*}%
for any $x\in H,$ which can be written in the operator order as 
\begin{equation*}
m^{2}T^{\ast }T\leq V^{\ast }V\leq M^{2}T^{\ast }T.
\end{equation*}%
Since $T\in \mathcal{B}^{-1}\left( H\right) $, then this inequality is
equivalent to 
\begin{equation*}
m^{2}1_{H}\leq \left( T^{-1}\right) ^{\ast }V^{\ast }VT^{-1}\leq M^{2}1_{H},
\end{equation*}%
namely 
\begin{equation*}
m^{2}1_{H}\leq \left\vert VT^{-1}\right\vert ^{2}\leq M^{2}1_{H},
\end{equation*}%
which in its turn is equivalent to (\ref{mM'}).
\end{proof}

We have:

\begin{theorem}
\label{t.2.1}Let $\Phi :I\rightarrow \mathbb{R}$ be a convex function on the
interval of positive numbers $I$, $T,$ $V\in \mathcal{B}^{-1}\left( H\right) 
$ such that there exists the positive numbers $m<M$ with $\left[ m^{2},M^{2}%
\right] \subset \mathring{I}$ satisfying either the condition (\ref{mM}),
or, equivalently, the condition (\ref{mM'}). Then for any $\varphi \in
\partial \Phi $ and any $t\in \mathring{I}$ we have%
\begin{equation}
\circledcirc _{\Phi }\left( V,T\right) \geq \Phi \left( t\right) \left\vert
T\right\vert ^{2}+\varphi \left( t\right) \left( \left\vert V\right\vert
^{2}-t\left\vert T\right\vert ^{2}\right) .  \label{e.2.1}
\end{equation}%
In particular, 
\begin{align}
\circledcirc _{\Phi }\left( V,T\right) & \geq \Phi \left( \frac{m^{2}+M^{2}}{%
2}\right) \left\vert T\right\vert ^{2}  \label{e.2.1.a} \\
& +\varphi \left( \frac{m^{2}+M^{2}}{2}\right) \left( \left\vert
V\right\vert ^{2}-\frac{m^{2}+M^{2}}{2}\left\vert T\right\vert ^{2}\right) .
\notag
\end{align}
\end{theorem}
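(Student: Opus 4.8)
The plan is to deduce the operator inequality (\ref{e.2.1}) from the scalar subdifferential inequality by means of the continuous functional calculus, followed by a conjugation by $T$.

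First I would fix $t\in \mathring{I}$ and $\varphi \in \partial \Phi $ and introduce the auxiliary function
\[
g\left( s\right) :=\Phi \left( s\right) -\Phi \left( t\right) -\left(
s-t\right) \varphi \left( t\right) ,\qquad s\in I.
\]
By the definition (\ref{G}) of the subdifferential we have $g\left( s\right)
\geq 0$ for every $s\in I$; moreover $\varphi \left( t\right) \in \mathbb{R}$
since $t\in \mathring{I}$, so $g$ is a real-valued function, continuous on $
\mathring{I}$ because $\Phi $ is. The hypothesis (\ref{mM'}) (equivalently,
by Lemma \ref{l.4.1}, (\ref{mM})) gives $m^{2}1_{H}\leq \left\vert
VT^{-1}\right\vert ^{2}\leq M^{2}1_{H}$, hence the positive selfadjoint
operator $X:=\left\vert VT^{-1}\right\vert ^{2}$ satisfies $\limfunc{Sp}
\left( X\right) \subset \left[ m^{2},M^{2}\right] \subset \mathring{I}$.
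Applying the continuous functional calculus to the nonnegative continuous
function $g$ on $\left[ m^{2},M^{2}\right] $ we obtain $g\left( X\right) \geq
0$, that is, the operator inequality
\[
\Phi \left( X\right) \geq \Phi \left( t\right) 1_{H}+\varphi \left( t\right)
\left( X-t1_{H}\right) .
\]

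Next I would conjugate this inequality by $T$, i.e. multiply on the left by $
T^{\ast }$ and on the right by $T$. Since the map $A\mapsto T^{\ast }AT$
preserves the operator order on selfadjoint operators, this yields
\[
T^{\ast }\Phi \left( X\right) T\geq \Phi \left( t\right) T^{\ast }T+\varphi
\left( t\right) \left( T^{\ast }XT-tT^{\ast }T\right) .
\]
It remains only to identify the three operators appearing here: by
definition of the modulus $T^{\ast }T=\left\vert T\right\vert ^{2}$, while
\[
T^{\ast }XT=T^{\ast }\left( T^{-1}\right) ^{\ast }V^{\ast }VT^{-1}T=V^{\ast
}V=\left\vert V\right\vert ^{2},
\]
and by (\ref{Def}) the left-hand side is $T^{\ast }\Phi \left( \left\vert
VT^{-1}\right\vert ^{2}\right) T=\circledcirc _{\Phi }\left( V,T\right) $.
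Substituting these gives exactly (\ref{e.2.1}). The particular inequality (%
\ref{e.2.1.a}) then follows by taking $t=\frac{m^{2}+M^{2}}{2}$, the
midpoint of $\left[ m^{2},M^{2}\right] $, which lies in $\mathring{I}$ since $
m<M$.

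The argument is essentially routine; the only point that needs a word of
justification is the passage from the scalar inequality $g\geq 0$ on $\left[
m^{2},M^{2}\right] $ to the operator inequality $g\left( X\right) \geq 0$.
This is the standard order-preservation property of the continuous
functional calculus, valid because $\limfunc{Sp}\left( X\right) $ is
contained in the interval $\left[ m^{2},M^{2}\right] $ on which $g$ is
defined, continuous and nonnegative, together with the fact that
conjugation by the bounded operator $T$ preserves the operator order.
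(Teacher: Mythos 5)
Your proposal is correct and follows essentially the same route as the paper: the scalar subdifferential inequality (\ref{G}) at the point $t$, transferred to the operator $X=\left\vert VT^{-1}\right\vert ^{2}$ via the order-preserving continuous functional calculus (using $\limfunc{Sp}\left( X\right) \subseteq \left[ m^{2},M^{2}\right]$ from Lemma \ref{l.4.1}), followed by conjugation with $T$ and the identification $T^{\ast }XT=\left\vert V\right\vert ^{2}$. The only cosmetic difference is that you package the gradient inequality as a nonnegative auxiliary function $g$ before applying the functional calculus, which changes nothing of substance.
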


\begin{proof}
From (\ref{G}) we have%
\begin{equation}
\Phi \left( s\right) \geq \Phi \left( t\right) +\left( s-t\right) \varphi
\left( t\right)  \label{e.2.2}
\end{equation}%
for any $s\in \left[ m^{2},M^{2}\right] $ and $t\in \mathring{I}$.

Using the continuous functional calculus for a selfadjoint operator $X$ with 
$\limfunc{Sp}\left( X\right) \subseteq \left[ m^{2},M^{2}\right] \subset 
\mathring{I}$ we have from (\ref{e.2.2}) in the operator order that 
\begin{equation}
\Phi \left( X\right) \geq \Phi \left( t\right) 1_{H}+\varphi \left( t\right)
\left( X-t1_{H}\right)  \label{e.2.3.a}
\end{equation}%
for any $t\in \mathring{I}$.

Now, if we take $X=\left\vert VT^{-1}\right\vert ^{2}$ in (\ref{e.2.3.a}),
then we get%
\begin{equation}
\Phi \left( \left\vert VT^{-1}\right\vert ^{2}\right) \geq \Phi \left(
t\right) 1_{H}+\varphi \left( t\right) \left( \left\vert VT^{-1}\right\vert
^{2}-t1_{H}\right)  \label{e.2.4}
\end{equation}%
for any $t\in \mathring{I}$.

It is well know that, if $P\geq 0$ then by multiplying at left with $T^{\ast
}$ and at right with $T,$ where $T\in \mathcal{B}\left( H\right) $ we have
that $T^{\ast }PT\geq 0.$ If $A,$ $B$ are selfadjoint operators with $A\geq
B $ then for any $T\in \mathcal{B}\left( H\right) $ we have $T^{\ast }AT\geq
T^{\ast }BT.$

So, if we multiply (\ref{e.2.4}) at left with $T^{\ast }$ and at right with $%
T,$ then we get%
\begin{align*}
T^{\ast }\Phi \left( \left\vert VT^{-1}\right\vert ^{2}\right) T& \geq \Phi
\left( t\right) \left\vert T\right\vert ^{2}+\varphi \left( t\right) T^{\ast
}\left( \left\vert VT^{-1}\right\vert ^{2}-t1_{H}\right) T \\
& =\Phi \left( t\right) \left\vert T\right\vert ^{2}+\varphi \left( t\right)
T^{\ast }\left( \left( T^{\ast }\right) ^{-1}V^{\ast }VT^{-1}-t1_{H}\right) T
\\
& =\Phi \left( t\right) \left\vert T\right\vert ^{2}+\varphi \left( t\right)
\left( \left\vert V\right\vert ^{2}-t\left\vert T\right\vert ^{2}\right)
\end{align*}%
for any $t\in \mathring{I}$, which proves the desired inequality (\ref{e.2.1}%
).
\end{proof}

\begin{corollary}
\label{c.2.1}With the assumptions of Theorem \ref{t.2.1}, we have for any $%
x\in H\setminus \left\{ 0\right\} $ that%
\begin{equation}
\circledcirc _{\Phi }\left( V,T\right) \geq \Phi \left( \frac{\left\Vert
Vx\right\Vert ^{2}}{\left\Vert Tx\right\Vert ^{2}}\right) \left\vert
T\right\vert ^{2}+\varphi \left( \frac{\left\Vert Vx\right\Vert ^{2}}{%
\left\Vert Tx\right\Vert ^{2}}\right) \left( \left\vert V\right\vert ^{2}-%
\frac{\left\Vert Vx\right\Vert ^{2}}{\left\Vert Tx\right\Vert ^{2}}%
\left\vert T\right\vert ^{2}\right)  \label{e.2.6}
\end{equation}%
in the operator order of $\mathcal{B}\left( H\right) .$

In particular, we have the Jensen's type inequality 
\begin{equation}
\frac{\left\langle \circledcirc _{\Phi }\left( V,T\right) x,x\right\rangle }{%
\left\Vert Tx\right\Vert ^{2}}\geq \Phi \left( \frac{\left\Vert
Vx\right\Vert ^{2}}{\left\Vert Tx\right\Vert ^{2}}\right)  \label{e.2.7}
\end{equation}%
$x\in H\setminus \left\{ 0\right\} .$
\end{corollary}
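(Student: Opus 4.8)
The plan is to specialize Theorem \ref{t.2.1} to a particular value of the scalar parameter $t$ that depends on the chosen vector. Fix $x\in H\setminus\left\{0\right\}$ and put $t:=\frac{\left\Vert Vx\right\Vert ^{2}}{\left\Vert Tx\right\Vert ^{2}}$; this is well defined because $T\in\mathcal{B}^{-1}\left(H\right)$ is injective, so $Tx\neq 0$. The first thing I would verify is that $t\in\mathring{I}$, and this is exactly where the hypothesis (\ref{mM}) is used: squaring $m\left\Vert Tx\right\Vert\leq\left\Vert Vx\right\Vert\leq M\left\Vert Tx\right\Vert$ and dividing through by $\left\Vert Tx\right\Vert ^{2}$ gives $m^{2}\leq t\leq M^{2}$, hence $t\in\left[m^{2},M^{2}\right]\subset\mathring{I}$. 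Thus Theorem \ref{t.2.1} applies with this particular $t$, and substituting it into inequality (\ref{e.2.1}) yields (\ref{e.2.6}) verbatim, the inequality holding in the operator order of $\mathcal{B}\left(H\right)$ since $\left\vert T\right\vert ^{2}=T^{\ast}T$ and $\left\vert V\right\vert ^{2}=V^{\ast}V$ are genuine operators while $\Phi\left(t\right)$ and $\varphi\left(t\right)$ are scalars once $x$ is fixed.

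For the Jensen-type scalar inequality (\ref{e.2.7}), I would take the sesquilinear form $\left\langle\,\cdot\,x,x\right\rangle$ of both sides of (\ref{e.2.6}). Using $\left\langle\left\vert T\right\vert ^{2}x,x\right\rangle=\left\langle T^{\ast}Tx,x\right\rangle=\left\Vert Tx\right\Vert ^{2}$ and $\left\langle\left\vert V\right\vert ^{2}x,x\right\rangle=\left\Vert Vx\right\Vert ^{2}$, the first-order correction term becomes
\[
\varphi\left(t\right)\left(\left\Vert Vx\right\Vert ^{2}-t\left\Vert Tx\right\Vert ^{2}\right)=\varphi\left(t\right)\left(\left\Vert Vx\right\Vert ^{2}-\frac{\left\Vert Vx\right\Vert ^{2}}{\left\Vert Tx\right\Vert ^{2}}\left\Vert Tx\right\Vert ^{2}\right)=0,
\]
so the right-hand side collapses to $\Phi\left(t\right)\left\Vert Tx\right\Vert ^{2}$. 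Dividing by $\left\Vert Tx\right\Vert ^{2}>0$ then gives (\ref{e.2.7}).

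I do not expect a genuine obstacle here: the content of the corollary is simply the observation that the free parameter $t$ in Theorem \ref{t.2.1} can be tuned per vector so as to annihilate the subgradient term, leaving the clean Jensen-type estimate. The only point requiring a line of care is confirming that $t=\left\Vert Vx\right\Vert ^{2}/\left\Vert Tx\right\Vert ^{2}$ lies in $\mathring{I}$, which, as noted, is immediate from (\ref{mM}).
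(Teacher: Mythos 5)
Your proposal is correct and follows essentially the same route as the paper: choose $t=\left\Vert Vx\right\Vert ^{2}/\left\Vert Tx\right\Vert ^{2}$, check it lies in $\left[ m^{2},M^{2}\right] \subset \mathring{I}$ (you do this by squaring (\ref{mM}) directly, the paper by viewing $t$ as the normalized quadratic form $\langle \left\vert VT^{-1}\right\vert ^{2}u,u\rangle$ with $u=Tx/\left\Vert Tx\right\Vert$ — equivalent via Lemma \ref{l.4.1}), substitute into (\ref{e.2.1}), and then evaluate at $x$ so the subgradient term vanishes. No gaps.
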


\begin{proof}
For $x\in H\setminus \left\{ 0\right\} $ we have 
\begin{align*}
t_{A,B}& =\frac{\left\Vert Vx\right\Vert ^{2}}{\left\Vert Tx\right\Vert ^{2}}%
=\frac{\left\langle \left\vert V\right\vert ^{2}x,x\right\rangle }{%
\left\langle \left\vert T\right\vert ^{2}x,x\right\rangle }=\frac{%
\left\langle T^{\ast }\left( T^{\ast }\right) ^{-1}V^{\ast
}VT^{-1}Tx,x\right\rangle }{\left\langle Tx,Tx\right\rangle } \\
& =\frac{\left\langle \left( \left( T^{\ast }\right) ^{-1}V^{\ast
}VT^{-1}\right) Tx,Tx\right\rangle }{\left\langle Tx,Tx\right\rangle }=\frac{%
\left\langle \left( \left( T^{\ast }\right) ^{-1}V^{\ast }VT^{-1}\right)
Tx,Tx\right\rangle }{\left\Vert Tx\right\Vert ^{2}}.
\end{align*}%
If we put 
\begin{equation*}
u=\frac{Tx}{\left\Vert Tx\right\Vert }\neq 0,
\end{equation*}%
then $\left\Vert u\right\Vert =1$ and%
\begin{equation*}
t_{A,B}=\left\langle \left( \left( T^{\ast }\right) ^{-1}V^{\ast
}VT^{-1}\right) u,u\right\rangle \in \left[ m^{2},M^{2}\right] \subset 
\mathring{I}.
\end{equation*}%
By taking $t=t_{A,B}$ in (\ref{e.2.1}) we get (\ref{e.2.6}).

The inequality (\ref{e.2.6}) is equivalent to%
\begin{align*}
\left\langle \circledcirc _{\Phi }\left( V,T\right) y,y\right\rangle & \geq
\Phi \left( \frac{\left\Vert Vx\right\Vert ^{2}}{\left\Vert Tx\right\Vert
^{2}}\right) \left\langle \left\vert T\right\vert ^{2}y,y\right\rangle \\
& +\varphi \left( \frac{\left\Vert Vx\right\Vert ^{2}}{\left\Vert
Tx\right\Vert ^{2}}\right) \left( \left\langle \left\vert V\right\vert
^{2}y,y\right\rangle -\frac{\left\Vert Vx\right\Vert ^{2}}{\left\Vert
Tx\right\Vert ^{2}}\left\langle \left\vert T\right\vert ^{2}y,y\right\rangle
\right)
\end{align*}%
for any $y\in H.$

It can be written as 
\begin{align}
\left\langle \circledcirc _{\Phi }\left( V,T\right) y,y\right\rangle & \geq
\Phi \left( \frac{\left\Vert Vx\right\Vert ^{2}}{\left\Vert Tx\right\Vert
^{2}}\right) \left\Vert Ty\right\Vert ^{2}  \label{e.2.8} \\
& +\varphi \left( \frac{\left\Vert Vx\right\Vert ^{2}}{\left\Vert
Tx\right\Vert ^{2}}\right) \left( \left\Vert Vy\right\Vert ^{2}-\frac{%
\left\Vert Vx\right\Vert ^{2}}{\left\Vert Tx\right\Vert ^{2}}\left\Vert
Ty\right\Vert ^{2}\right) .  \notag
\end{align}

This is an inequality of interest in itself.

In particular, if we take in (\ref{e.2.8}) $y=x,$ then we get the desired
result (\ref{e.2.7}).
\end{proof}

\begin{corollary}
\label{c.2.2}With the assumptions of Theorem \ref{t.2.1}, we have%
\begin{align}
& \circledcirc _{\Phi }\left( V,T\right)  \label{e.2.9} \\
& \geq 2\left( \frac{1}{M^{2}-m^{2}}\int_{m^{2}}^{M^{2}}\Phi \left( t\right)
dt\right) \left\vert T\right\vert ^{2}  \notag \\
& -\frac{1}{M^{2}-m^{2}}\left[ \Phi \left( M^{2}\right) \left(
M^{2}\left\vert T\right\vert ^{2}-\left\vert V\right\vert ^{2}\right) +\Phi
\left( m^{2}\right) \left( \left\vert V\right\vert ^{2}-m\left\vert
T\right\vert ^{2}\right) \right] .  \notag
\end{align}
\end{corollary}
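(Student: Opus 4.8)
The plan is to integrate in $t$ the pointwise lower bound furnished by Theorem \ref{t.2.1}. Choose once and for all $\varphi =\Phi _{+}^{\prime }\in \partial \Phi $; it is nondecreasing and finite on $\mathring{I}$, hence bounded on the compact interval $\left[ m^{2},M^{2}\right] \subset \mathring{I}$. Taking inner products in (\ref{e.2.1}) with an arbitrary $x\in H$ and writing $\left\Vert Tx\right\Vert ^{2}=\left\langle \left\vert T\right\vert ^{2}x,x\right\rangle $, $\left\Vert Vx\right\Vert ^{2}=\left\langle \left\vert V\right\vert ^{2}x,x\right\rangle $, we get the scalar inequality
\begin{equation*}
\left\langle \circledcirc _{\Phi }\left( V,T\right) x,x\right\rangle \geq \Phi \left( t\right) \left\Vert Tx\right\Vert ^{2}+\varphi \left( t\right) \left( \left\Vert Vx\right\Vert ^{2}-t\left\Vert Tx\right\Vert ^{2}\right)
\end{equation*}
valid for every $t\in \left[ m^{2},M^{2}\right] $. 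Since the right-hand side is a bounded (indeed Riemann integrable) function of $t$, I would integrate both sides over $\left[ m^{2},M^{2}\right] $, noting the left-hand side is constant in $t$, and then divide by $M^{2}-m^{2}>0$ to obtain
\begin{equation*}
\left\langle \circledcirc _{\Phi }\left( V,T\right) x,x\right\rangle \geq \frac{\left\Vert Tx\right\Vert ^{2}}{M^{2}-m^{2}}\int_{m^{2}}^{M^{2}}\Phi \left( t\right) dt+\frac{\left\Vert Vx\right\Vert ^{2}}{M^{2}-m^{2}}\int_{m^{2}}^{M^{2}}\varphi \left( t\right) dt-\frac{\left\Vert Tx\right\Vert ^{2}}{M^{2}-m^{2}}\int_{m^{2}}^{M^{2}}t\varphi \left( t\right) dt .
\end{equation*}

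Next I would evaluate the two integrals involving $\varphi $. Because $\Phi $ is convex on $I$ it is locally absolutely continuous on $\mathring{I}$ with $\Phi ^{\prime }=\varphi $ almost everywhere, so $\int_{m^{2}}^{M^{2}}\varphi \left( t\right) dt=\Phi \left( M^{2}\right) -\Phi \left( m^{2}\right) $ and $\Phi $ is an antiderivative of $\varphi $ on $\left[ m^{2},M^{2}\right] $. Integration by parts (legitimate for absolutely continuous functions) then gives
\begin{equation*}
\int_{m^{2}}^{M^{2}}t\varphi \left( t\right) dt=M^{2}\Phi \left( M^{2}\right) -m^{2}\Phi \left( m^{2}\right) -\int_{m^{2}}^{M^{2}}\Phi \left( t\right) dt .
\end{equation*}
Substituting these into the previous display, the term $\int_{m^{2}}^{M^{2}}\Phi \left( t\right) dt$ coming back from the third integral adds to the one in the first term, producing the coefficient $2$, while the boundary values collect into $\Phi \left( M^{2}\right) \left( \left\Vert Vx\right\Vert ^{2}-M^{2}\left\Vert Tx\right\Vert ^{2}\right) +\Phi \left( m^{2}\right) \left( m^{2}\left\Vert Tx\right\Vert ^{2}-\left\Vert Vx\right\Vert ^{2}\right) $, that is, $-\left[ \Phi \left( M^{2}\right) \left( M^{2}\left\Vert Tx\right\Vert ^{2}-\left\Vert Vx\right\Vert ^{2}\right) +\Phi \left( m^{2}\right) \left( \left\Vert Vx\right\Vert ^{2}-m^{2}\left\Vert Tx\right\Vert ^{2}\right) \right] $. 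Reverting again to $\left\langle \left\vert T\right\vert ^{2}x,x\right\rangle $ and $\left\langle \left\vert V\right\vert ^{2}x,x\right\rangle $ and using that $x\in H$ was arbitrary turns this scalar estimate into the operator inequality (\ref{e.2.9}) (with the coefficient $m^{2}$ in the last bracket).

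The argument is essentially bookkeeping, so I do not anticipate a genuine obstacle; the only points deserving care are (i) passing from the operator inequality (\ref{e.2.1}) to a scalar inequality \emph{before} integrating in $t$, so that one is integrating bona fide real-valued inequalities, and (ii) justifying $\int_{m^{2}}^{M^{2}}\varphi =\Phi \left( M^{2}\right) -\Phi \left( m^{2}\right) $ and the integration by parts when $\varphi $ need not be continuous — both settled by the local absolute continuity of the convex function $\Phi $ on $\mathring{I}$. One may equally take $\varphi =\Phi _{-}^{\prime }$ (or any $\varphi \in \partial \Phi $): this alters the integrand only on an at most countable set and hence leaves the conclusion unchanged.
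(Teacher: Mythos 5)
Your proposal is correct and follows essentially the same route as the paper: average the pointwise bound (\ref{e.2.1}) over $t\in\left[ m^{2},M^{2}\right] $, evaluate $\int_{m^{2}}^{M^{2}}\varphi $ as $\Phi \left( M^{2}\right) -\Phi \left( m^{2}\right) $ and $\int_{m^{2}}^{M^{2}}t\varphi \left( t\right) dt$ by parts, and collect terms (you correctly note that the $m$ in the final bracket of the stated inequality should be $m^{2}$, which is what the paper's own derivation produces). Your extra care in scalarizing before integrating and invoking local absolute continuity only makes rigorous what the paper does directly at the operator level.
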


\begin{proof}
If we take the integral mean in the interval $\left[ m^{2},M^{2}\right] $ of
the inequality (\ref{e.2.1}), then we get%
\begin{align}
\circledcirc _{\Phi }\left( V,T\right) & \geq \left( \frac{1}{M^{2}-m^{2}}%
\int_{m^{2}}^{M^{2}}\Phi \left( t\right) dt\right) \left\vert T\right\vert
^{2}  \label{e.2.10} \\
& +\left( \frac{1}{M^{2}-m^{2}}\int_{m^{2}}^{M^{2}}\varphi \left( t\right)
dt\right) \left\vert V\right\vert ^{2}  \notag \\
& -\left( \frac{1}{M^{2}-m^{2}}\int_{m^{2}}^{M^{2}}t\varphi \left( t\right)
dt\right) \left\vert T\right\vert ^{2}.  \notag
\end{align}%
Observe that, since $\varphi \in \partial \Phi ,$ hence%
\begin{equation*}
\frac{1}{M^{2}-m^{2}}\int_{m^{2}}^{M^{2}}\varphi \left( t\right) dt=\frac{%
\Phi \left( M^{2}\right) -\Phi \left( m^{2}\right) }{M^{2}-m^{2}}
\end{equation*}%
and%
\begin{align*}
\frac{1}{M^{2}-m^{2}}\int_{m^{2}}^{M^{2}}t\varphi \left( t\right) dt& =\frac{%
1}{M^{2}-m^{2}}\left[ \left. t\Phi \left( t\right) \right\vert
_{m^{2}}^{M^{2}}-\int_{m^{2}}^{M^{2}}\Phi \left( t\right) dt\right] \\
& =\frac{M^{2}\Phi \left( M^{2}\right) -m^{2}\Phi \left( m^{2}\right) }{%
M^{2}-m^{2}}-\frac{1}{M^{2}-m^{2}}\int_{m^{2}}^{M^{2}}\Phi \left( t\right) dt
\end{align*}%
and by (\ref{e.2.10}) we get%
\begin{align*}
\circledcirc _{\Phi }\left( V,T\right) & \geq \left( \frac{1}{M^{2}-m^{2}}%
\int_{m^{2}}^{M^{2}}\Phi \left( t\right) dt\right) \left\vert T\right\vert
^{2}+\frac{\Phi \left( M^{2}\right) -\Phi \left( m^{2}\right) }{M^{2}-m^{2}}%
\left\vert V\right\vert ^{2} \\
& -\left( \frac{M^{2}\Phi \left( M^{2}\right) -m^{2}\Phi \left( m^{2}\right) 
}{M^{2}-m^{2}}-\frac{1}{M^{2}-m^{2}}\int_{m^{2}}^{M^{2}}\Phi \left( t\right)
dt\right) \left\vert T\right\vert ^{2} \\
& =2\left( \frac{1}{M^{2}-m^{2}}\int_{m^{2}}^{M^{2}}\Phi \left( t\right)
dt\right) \left\vert T\right\vert ^{2} \\
& -\frac{1}{M^{2}-m^{2}}\left[ \Phi \left( M^{2}\right) \left(
M^{2}\left\vert T\right\vert ^{2}-\left\vert V\right\vert ^{2}\right) +\Phi
\left( m^{2}\right) \left( \left\vert V\right\vert ^{2}-m^{2}\left\vert
T\right\vert ^{2}\right) \right]
\end{align*}%
that proves the desired result (\ref{e.2.9}).
\end{proof}

The following result providing upper bounds for the quadratic perspective
also holds.

\begin{theorem}
\label{t.2.2}Let $\Phi :I\rightarrow \mathbb{R}$ be a continuously
differentiable convex function on $\mathring{I}$, $T,$ $V\in \mathcal{B}%
^{-1}\left( H\right) $ such that there exists the positive numbers $m<M$
with $\left[ m^{2},M^{2}\right] \subset \mathring{I}$ satisfying either the
condition (\ref{mM}), or, equivalently, the condition (\ref{mM'}). Then for
any $t\in \mathring{I}$ we have%
\begin{align}
\circledcirc _{\Phi }\left( V,T\right) & \leq \Phi \left( t\right)
\left\vert T\right\vert ^{2}+\circledcirc _{\Phi ^{\prime }\ell }\left(
V,T\right) -t\circledcirc _{\Phi ^{\prime }}\left( V,T\right)  \label{e.2.11}
\\
& \leq \Phi \left( t\right) \left\vert T\right\vert ^{2}+\Phi ^{\prime
}\left( t\right) \left( \left\vert V\right\vert ^{2}-t\left\vert
T\right\vert ^{2}\right)  \notag \\
& +\left[ \Phi _{-}^{\prime }\left( M^{2}\right) -\Phi _{+}^{\prime }\left(
m^{2}\right) \right] \circledcirc _{\left\vert \cdot \right\vert ,t}\left(
V,T\right) ,  \notag
\end{align}%
where $\ell $ is the identity function, i.e. $\ell \left( t\right) =t$ and 
\begin{equation*}
\circledcirc _{\left\vert \cdot \right\vert ,t}\left( V,T\right) :=T^{\ast
}\left\vert \left( T^{\ast }\right) ^{-1}\left( \left\vert V\right\vert
^{2}-t\left\vert T\right\vert ^{2}\right) T^{-1}\right\vert T.
\end{equation*}

In particular, we have%
\begin{align}
& \circledcirc _{\Phi }\left( V,T\right)  \label{e.2.11.a} \\
& \leq \Phi \left( \frac{m^{2}+M^{2}}{2}\right) \left\vert T\right\vert
^{2}+\circledcirc _{\Phi ^{\prime }\ell }\left( V,T\right) -\frac{m^{2}+M^{2}%
}{2}\circledcirc _{\Phi ^{\prime }}\left( V,T\right)  \notag \\
& \leq \Phi \left( \frac{m^{2}+M^{2}}{2}\right) \left\vert T\right\vert
^{2}+\Phi ^{\prime }\left( \frac{m^{2}+M^{2}}{2}\right) \left( \left\vert
V\right\vert ^{2}-\frac{m^{2}+M^{2}}{2}\left\vert T\right\vert ^{2}\right) 
\notag \\
& +\left[ \Phi _{-}^{\prime }\left( M\right) -\Phi _{+}^{\prime }\left(
m\right) \right] \circledcirc _{\left\vert \cdot \right\vert ,\frac{m+M}{2}%
}\left( V,T\right)  \notag \\
& \leq \Phi \left( \frac{m^{2}+M^{2}}{2}\right) \left\vert T\right\vert
^{2}+\Phi ^{\prime }\left( \frac{m^{2}+M^{2}}{2}\right) \left( \left\vert
V\right\vert ^{2}-\frac{m^{2}+M^{2}}{2}\left\vert T\right\vert ^{2}\right) 
\notag \\
& +\frac{1}{2}\left( M^{2}-m^{2}\right) \left[ \Phi _{-}^{\prime }\left(
M^{2}\right) -\Phi _{+}^{\prime }\left( m^{2}\right) \right] \left\vert
T\right\vert ^{2}.  \notag
\end{align}
\end{theorem}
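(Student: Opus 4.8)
The plan is to run, in the differentiable convex setting, the same three-step scheme used for Theorem~\ref{t.2.1} --- \emph{scalar inequality} $\Rightarrow$ \emph{functional calculus} $\Rightarrow$ \emph{compression by} $T^{\ast}(\cdot)T$ --- but now starting from the ``reverse'' gradient inequality. Write $X:=\left\vert VT^{-1}\right\vert^{2}$. By Lemma~\ref{l.4.1} the operator $X$ is selfadjoint with $\limfunc{Sp}\left(X\right)\subseteq\left[m^{2},M^{2}\right]\subset\mathring{I}$, and (exactly as in the proof of Theorem~\ref{t.2.1}) one has the identity $\left(T^{\ast}\right)^{-1}\left(\left\vert V\right\vert^{2}-t\left\vert T\right\vert^{2}\right)T^{-1}=\left(T^{\ast}\right)^{-1}V^{\ast}VT^{-1}-t1_{H}=X-t1_{H}$, which records simultaneously $T^{\ast}\left(X-t1_{H}\right)T=\left\vert V\right\vert^{2}-t\left\vert T\right\vert^{2}$ and $\circledcirc_{\left\vert\cdot\right\vert,t}\left(V,T\right)=T^{\ast}\left\vert X-t1_{H}\right\vert T$.

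For the first inequality in (\ref{e.2.11}): applying (\ref{G}) with $\varphi=\Phi^{\prime}$ and $a=s$ gives $\Phi\left(t\right)\geq\Phi\left(s\right)+\left(t-s\right)\Phi^{\prime}\left(s\right)$, i.e. $\Phi\left(s\right)\leq\Phi\left(t\right)+\left(s-t\right)\Phi^{\prime}\left(s\right)$ for all $s,t\in\mathring{I}$. Feeding $X$ into this scalar inequality (legitimate since $\limfunc{Sp}\left(X\right)\subset\mathring{I}$ and every operator appearing is a function of $X$, hence all commute) yields $\Phi\left(X\right)\leq\Phi\left(t\right)1_{H}+X\Phi^{\prime}\left(X\right)-t\Phi^{\prime}\left(X\right)$; compressing by $T^{\ast}(\cdot)T$ and reading off $T^{\ast}X\Phi^{\prime}\left(X\right)T=\circledcirc_{\Phi^{\prime}\ell}\left(V,T\right)$ and $T^{\ast}\Phi^{\prime}\left(X\right)T=\circledcirc_{\Phi^{\prime}}\left(V,T\right)$ gives $\circledcirc_{\Phi}\left(V,T\right)\leq\Phi\left(t\right)\left\vert T\right\vert^{2}+\circledcirc_{\Phi^{\prime}\ell}\left(V,T\right)-t\circledcirc_{\Phi^{\prime}}\left(V,T\right)$.

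For the second inequality, set $C:=\Phi_{-}^{\prime}\left(M^{2}\right)-\Phi_{+}^{\prime}\left(m^{2}\right)=\Phi^{\prime}\left(M^{2}\right)-\Phi^{\prime}\left(m^{2}\right)\geq0$ (the middle equality because $\Phi$ is continuously differentiable and $\left[m^{2},M^{2}\right]\subset\mathring{I}$). The scalar fact I use is
\[
\left(s-t\right)\left(\Phi^{\prime}\left(s\right)-\Phi^{\prime}\left(t\right)\right)\leq C\left\vert s-t\right\vert,\qquad s,t\in\left[m^{2},M^{2}\right],
\]
which holds because $\Phi^{\prime}$ is nondecreasing --- so the two factors on the left share a sign and the left side equals $\left\vert s-t\right\vert\cdot\left\vert\Phi^{\prime}\left(s\right)-\Phi^{\prime}\left(t\right)\right\vert$ --- while both $\Phi^{\prime}\left(s\right),\Phi^{\prime}\left(t\right)$ lie in the interval $\left[\Phi^{\prime}\left(m^{2}\right),\Phi^{\prime}\left(M^{2}\right)\right]$ of length $C$. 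Substituting $X$ gives $\left(X-t1_{H}\right)\left(\Phi^{\prime}\left(X\right)-\Phi^{\prime}\left(t\right)1_{H}\right)\leq C\left\vert X-t1_{H}\right\vert$; expanding the left-hand operator as $X\Phi^{\prime}\left(X\right)-\Phi^{\prime}\left(t\right)X-t\Phi^{\prime}\left(X\right)+t\Phi^{\prime}\left(t\right)1_{H}$, compressing by $T^{\ast}(\cdot)T$, and using $T^{\ast}XT=\left\vert V\right\vert^{2}$ together with the identity displayed in the first paragraph, one obtains $\circledcirc_{\Phi^{\prime}\ell}\left(V,T\right)-t\circledcirc_{\Phi^{\prime}}\left(V,T\right)-\Phi^{\prime}\left(t\right)\left(\left\vert V\right\vert^{2}-t\left\vert T\right\vert^{2}\right)\leq C\,\circledcirc_{\left\vert\cdot\right\vert,t}\left(V,T\right)$; combining this with the first inequality gives the second inequality in (\ref{e.2.11}).

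Finally, (\ref{e.2.11.a}) follows from (\ref{e.2.11}) by taking $t=\frac{m^{2}+M^{2}}{2}\in\left[m^{2},M^{2}\right]$ (first three lines), and then observing that $\limfunc{Sp}\left(X-\frac{m^{2}+M^{2}}{2}1_{H}\right)\subseteq\left[-\tfrac{M^{2}-m^{2}}{2},\tfrac{M^{2}-m^{2}}{2}\right]$, so $\left\vert X-\frac{m^{2}+M^{2}}{2}1_{H}\right\vert\leq\tfrac{M^{2}-m^{2}}{2}1_{H}$; compressing by $T^{\ast}(\cdot)T$ yields $\circledcirc_{\left\vert\cdot\right\vert,\frac{m^{2}+M^{2}}{2}}\left(V,T\right)\leq\tfrac{1}{2}\left(M^{2}-m^{2}\right)\left\vert T\right\vert^{2}$, which inserted in the third line produces the last line. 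The only step that needs real care is the scalar estimate of the third paragraph: keeping $\Phi^{\prime}\left(t\right)$ inside the same length-$C$ window as $\Phi^{\prime}\left(s\right)$ wants $t\in\left[m^{2},M^{2}\right]$, which is precisely the range relevant to the particular case $t=\frac{m^{2}+M^{2}}{2}$; everything else is the routine ``scalar $\to$ functional calculus $\to$ compress'' bookkeeping already performed for Theorem~\ref{t.2.1}.
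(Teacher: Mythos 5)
Your proof is correct and follows essentially the same route as the paper: the reverse gradient inequality $\Phi(s)\le\Phi(t)+(s-t)\Phi'(s)$ pushed through the functional calculus and compressed by $T^{\ast}(\cdot)T$ for the first bound, the splitting $\Phi'(X)(X-t1_{H})=(\Phi'(X)-\Phi'(t)1_{H})(X-t1_{H})+\Phi'(t)(X-t1_{H})$ together with the monotonicity of $\Phi'$ for the second, and the spectral estimate $\bigl\vert X-\tfrac{m^{2}+M^{2}}{2}1_{H}\bigr\vert\le\tfrac{1}{2}(M^{2}-m^{2})1_{H}$ for the last line of (\ref{e.2.11.a}). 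Your closing caveat that the bound $\left\vert\Phi'(s)-\Phi'(t)\right\vert\le\Phi_{-}^{\prime}(M^{2})-\Phi_{+}^{\prime}(m^{2})$ genuinely requires $t\in[m^{2},M^{2}]$ (not merely $t\in\mathring{I}$, as the theorem's wording suggests) is a legitimate observation that the paper's own proof passes over silently.
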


\begin{proof}
By the gradient inequality we have%
\begin{equation}
\Phi ^{\prime }\left( s\right) \left( s-t\right) +\Phi \left( t\right) \geq
\Phi \left( s\right)  \label{e.2.12}
\end{equation}%
for any $s\in \left[ m^{2},M^{2}\right] $ and $t\in \mathring{I}.$

Using the continuous functional calculus for a selfadjoint operator $X$ with 
$\limfunc{Sp}\left( X\right) \subseteq \left[ m^{2},M^{2}\right] \subset 
\mathring{I}$ we have from (\ref{e.2.12}) in the operator order that 
\begin{equation}
\Phi ^{\prime }\left( X\right) \left( X-t1_{H}\right) +\Phi \left( t\right)
1_{H}\geq \Phi \left( X\right)  \label{e.2.13}
\end{equation}%
for any $t\in \mathring{I}$.

Now, if we take $X=\left\vert VT^{-1}\right\vert ^{2}$ in (\ref{e.2.13}) and
since $\limfunc{Sp}\left( \left\vert VT^{-1}\right\vert ^{2}\right)
\subseteq \left[ m^{2},M^{2}\right] ,$ then we get 
\begin{equation}
\Phi ^{\prime }\left( \left\vert VT^{-1}\right\vert ^{2}\right) \left(
\left\vert VT^{-1}\right\vert ^{2}-t1_{H}\right) +\Phi \left( t\right)
1_{H}\geq \Phi \left( \left\vert VT^{-1}\right\vert ^{2}\right)
\label{e.2.14}
\end{equation}%
for any $t\in \mathring{I}$.

So, if we multiply (\ref{e.2.14}) at left with $T^{\ast }$ and at right with 
$T,$ then we get%
\begin{equation}
T^{\ast }\Phi ^{\prime }\left( \left\vert VT^{-1}\right\vert ^{2}\right)
\left( \left\vert VT^{-1}\right\vert ^{2}-t1_{H}\right) T+\Phi \left(
t\right) \left\vert T\right\vert ^{2}\geq T^{\ast }\Phi \left( \left\vert
VT^{-1}\right\vert ^{2}\right) T  \label{e.2.15}
\end{equation}%
for any $t\in \mathring{I}$.

Since%
\begin{equation*}
T^{\ast }\Phi ^{\prime }\left( \left\vert VT^{-1}\right\vert ^{2}\right)
\left( \left\vert VT^{-1}\right\vert ^{2}-t1_{H}\right) T=\circledcirc
_{\Phi ^{\prime }\ell }\left( V,T\right) -t\circledcirc _{\Phi ^{\prime
}}\left( V,T\right) ,
\end{equation*}%
hence by (\ref{e.2.15}) we get the first inequality in (\ref{e.2.11}).

Now, observe that 
\begin{align*}
& T^{\ast }\Phi ^{\prime }\left( \left\vert VT^{-1}\right\vert ^{2}\right)
\left( \left\vert VT^{-1}\right\vert ^{2}-t1_{H}\right) T+\Phi \left(
t\right) \left\vert T\right\vert ^{2} \\
& =T^{\ast }\left( \Phi ^{\prime }\left( \left\vert VT^{-1}\right\vert
^{2}\right) -\Phi ^{\prime }\left( t\right) 1_{H}\right) \left( \left\vert
VT^{-1}\right\vert ^{2}-t1_{H}\right) T+\Phi \left( t\right) \left\vert
T\right\vert ^{2} \\
& +\Phi ^{\prime }\left( t\right) T^{\ast }\left( \left\vert
VT^{-1}\right\vert ^{2}-t1_{H}\right) T+\Phi \left( t\right) \left\vert
T\right\vert ^{2} \\
& =T^{\ast }\left( \Phi ^{\prime }\left( \left\vert VT^{-1}\right\vert
^{2}\right) -\Phi ^{\prime }\left( t\right) 1_{H}\right) \left( \left\vert
VT^{-1}\right\vert ^{2}-t1_{H}\right) T+\Phi \left( t\right) \left\vert
T\right\vert ^{2} \\
& +\Phi ^{\prime }\left( t\right) \left( \left\vert V\right\vert
^{2}-t\left\vert T\right\vert ^{2}\right) +\Phi \left( t\right) \left\vert
T\right\vert ^{2}
\end{align*}%
for any $t\in \mathring{I}$.

Since $\Phi ^{\prime }$ is nondecreasing on $\mathring{I}$ we have for any $%
s\in \left[ m^{2},M^{2}\right] $ and $t\in \mathring{I}$ that 
\begin{align*}
0& \leq \left( \Phi ^{\prime }\left( s\right) -\Phi ^{\prime }\left(
t\right) \right) \left( s-t\right) =\left\vert \left( \Phi ^{\prime }\left(
s\right) -\Phi ^{\prime }\left( t\right) \right) \left( s-t\right)
\right\vert \\
& =\left\vert \Phi ^{\prime }\left( s\right) -\Phi ^{\prime }\left( t\right)
\right\vert \left\vert s-t\right\vert \leq \left[ \Phi _{-}^{\prime }\left(
M\right) -\Phi _{+}^{\prime }\left( m\right) \right] \left\vert
s-t\right\vert ,
\end{align*}%
which, as above, implies in the operator order that%
\begin{align*}
& T^{\ast }\left( \Phi ^{\prime }\left( \left\vert VT^{-1}\right\vert
^{2}\right) -\Phi ^{\prime }\left( t\right) 1_{H}\right) \left( \left\vert
VT^{-1}\right\vert ^{2}-t1_{H}\right) T \\
& \leq \left[ \Phi _{-}^{\prime }\left( M\right) -\Phi _{+}^{\prime }\left(
m\right) \right] \left\vert T^{\ast }\left\vert VT^{-1}\right\vert
^{2}-t1_{H}\right\vert T \\
& =\left[ \Phi _{-}^{\prime }\left( M\right) -\Phi _{+}^{\prime }\left(
m\right) \right] \circledcirc _{\left\vert \cdot \right\vert ,t}\left(
V,T\right) .
\end{align*}%
This proves the second inequality in (\ref{e.2.11}).

We need to prove only the last part of (\ref{e.2.11.a}).

Since $s\in \left[ m^{2},M^{2}\right] ,$ then $\left\vert s-\frac{m^{2}+M^{2}%
}{2}\right\vert \leq \frac{1}{2}\left( M^{2}-m^{2}\right) $ that implies in
the operator order 
\begin{equation*}
\left\vert \left\vert VT^{-1}\right\vert ^{2}-\frac{m^{2}+M^{2}}{2}%
1_{H}\right\vert \leq \frac{1}{2}\left( M^{2}-m^{2}\right) 1_{H},
\end{equation*}%
which by multiplying at left with $T^{\ast }$ and at right with $T$ gives
that 
\begin{equation*}
\circledcirc _{\left\vert \cdot \right\vert ,\frac{m^{2}+M^{2}}{2}}\left(
V,T\right) \leq \frac{1}{2}\left( M^{2}-m^{2}\right) \left\vert T\right\vert
^{2}.
\end{equation*}
\end{proof}

\begin{corollary}
\label{c.2.3}With the assumptions of Theorem \ref{t.2.2}, we have for any $%
x\in H\setminus \left\{ 0\right\} $ that%
\begin{align}
\circledcirc _{\Phi }\left( V,T\right) & \leq \Phi \left( \frac{\left\Vert
Vx\right\Vert ^{2}}{\left\Vert Tx\right\Vert ^{2}}\right) \left\vert
T\right\vert ^{2}+\circledcirc _{\Phi ^{\prime }\ell }\left( V,T\right) -%
\frac{\left\Vert Vx\right\Vert ^{2}}{\left\Vert Tx\right\Vert ^{2}}%
\circledcirc _{\Phi ^{\prime }}\left( V,T\right)  \label{e.2.16} \\
& \leq \Phi \left( \frac{\left\Vert Vx\right\Vert ^{2}}{\left\Vert
Tx\right\Vert ^{2}}\right) \left\vert T\right\vert ^{2}+\Phi ^{\prime
}\left( \frac{\left\Vert Vx\right\Vert ^{2}}{\left\Vert Tx\right\Vert ^{2}}%
\right) \left( \left\vert V\right\vert ^{2}-\frac{\left\Vert Vx\right\Vert
^{2}}{\left\Vert Tx\right\Vert ^{2}}\left\vert T\right\vert ^{2}\right) 
\notag \\
& +\left[ \Phi _{-}^{\prime }\left( M\right) -\Phi _{+}^{\prime }\left(
m\right) \right] \circledcirc _{\left\vert \cdot \right\vert ,\frac{%
\left\Vert Vx\right\Vert ^{2}}{\left\Vert Tx\right\Vert ^{2}}}\left(
V,T\right) .  \notag
\end{align}%
In particular%
\begin{align}
& \left\langle \circledcirc _{\Phi }\left( V,T\right) x,x\right\rangle
\label{e.2.17} \\
& \leq \Phi \left( \frac{\left\Vert Vx\right\Vert ^{2}}{\left\Vert
Tx\right\Vert ^{2}}\right) \left\Vert Tx\right\Vert ^{2}+\left\langle
\circledcirc _{\Phi ^{\prime }\ell }\left( V,T\right) x,x\right\rangle -%
\frac{\left\Vert Vx\right\Vert ^{2}}{\left\Vert Tx\right\Vert ^{2}}%
\left\langle \circledcirc _{\Phi ^{\prime }}\left( V,T\right)
x,x\right\rangle  \notag \\
& \leq \Phi \left( \frac{\left\Vert Vx\right\Vert ^{2}}{\left\Vert
Tx\right\Vert ^{2}}\right) \left\Vert Tx\right\Vert ^{2}+\left[ \Phi
_{-}^{\prime }\left( M\right) -\Phi _{+}^{\prime }\left( m\right) \right]
\left\langle \circledcirc _{\left\vert \cdot \right\vert ,\frac{\left\Vert
Vx\right\Vert ^{2}}{\left\Vert Tx\right\Vert ^{2}}}\left( V,T\right)
x,x\right\rangle  \notag
\end{align}%
for any $x\in H\setminus \left\{ 0\right\} .$
\end{corollary}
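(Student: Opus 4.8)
The plan is to derive Corollary \ref{c.2.3} from Theorem \ref{t.2.2} by the same substitution trick used to pass from Theorem \ref{t.2.1} to Corollary \ref{c.2.1}. The key observation is that for a fixed $x\in H\setminus\{0\}$, the scalar $t_{x}:=\dfrac{\left\Vert Vx\right\Vert^{2}}{\left\Vert Tx\right\Vert^{2}}$ lies in $[m^{2},M^{2}]\subset\mathring{I}$: indeed, with $u=Tx/\left\Vert Tx\right\Vert$ we have $\left\Vert u\right\Vert=1$ and, as computed in the proof of Corollary \ref{c.2.1},
\begin{equation*}
t_{x}=\left\langle\left(\left(T^{\ast}\right)^{-1}V^{\ast}VT^{-1}\right)u,u\right\rangle=\left\langle\left\vert VT^{-1}\right\vert^{2}u,u\right\rangle\in\left[m^{2},M^{2}\right],
\end{equation*}
the last inclusion following from the operator inequality (\ref{mM'}). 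Hence $t_{x}$ is an admissible choice of the parameter $t$ in Theorem \ref{t.2.2}.

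Substituting $t=t_{x}$ into the chain (\ref{e.2.11}) immediately produces (\ref{e.2.16}): the first inequality of (\ref{e.2.16}) is the first inequality of (\ref{e.2.11}) with this choice of $t$, while the second inequality of (\ref{e.2.16}) drops the final nonnegative term $\left[\Phi_{-}^{\prime}\left(M^{2}\right)-\Phi_{+}^{\prime}\left(m^{2}\right)\right]\circledcirc_{\left\vert\cdot\right\vert,t_{x}}\left(V,T\right)$ from the second inequality of (\ref{e.2.11})—wait, more precisely it retains exactly that term (note the paper writes $\Phi_{-}^{\prime}(M)-\Phi_{+}^{\prime}(m)$ in the corollary, matching the notation used in the proof of Theorem \ref{t.2.2}), so (\ref{e.2.16}) is literally (\ref{e.2.11}) specialized at $t=t_{x}$. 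No new estimate is needed; the only point to check is the admissibility of $t_{x}$, which we just did.

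For the particular inequality (\ref{e.2.17}), the plan is to take the inner product of both sides of (\ref{e.2.16}) with the specific vector $x$ used to define $t_{x}$. Since $\left\langle\left\vert T\right\vert^{2}x,x\right\rangle=\left\Vert Tx\right\Vert^{2}$ and $\left\langle\left\vert V\right\vert^{2}x,x\right\rangle=\left\Vert Vx\right\Vert^{2}$, the term $\Phi^{\prime}\left(t_{x}\right)\left(\left\langle\left\vert V\right\vert^{2}x,x\right\rangle-t_{x}\left\langle\left\vert T\right\vert^{2}x,x\right\rangle\right)$ equals $\Phi^{\prime}\left(t_{x}\right)\left(\left\Vert Vx\right\Vert^{2}-t_{x}\left\Vert Tx\right\Vert^{2}\right)=0$ by the very definition of $t_{x}$. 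This collapse is what lets us pass from the three-term middle line of (\ref{e.2.16}) to the two-term bound in the last line of (\ref{e.2.17}); likewise the first line of (\ref{e.2.17}) is just the inner product of the first line of (\ref{e.2.16}) with $x$.

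There is essentially no obstacle here—the argument is a verbatim analogue of the proof of Corollary \ref{c.2.1}. The only mild subtlety is bookkeeping: one should be careful that the argument of $\Phi_{-}^{\prime}$ and $\Phi_{+}^{\prime}$ in (\ref{e.2.16})--(\ref{e.2.17}) is understood consistently with Theorem \ref{t.2.2} (the constants $m,M$ governing the oscillation bound of $\Phi^{\prime}$ on $[m^{2},M^{2}]$), and that $\circledcirc_{\left\vert\cdot\right\vert,t_{x}}\left(V,T\right)$ is well defined for the chosen $t_{x}$—which it is, being simply $T^{\ast}\left\vert\left(T^{\ast}\right)^{-1}\left(\left\vert V\right\vert^{2}-t_{x}\left\vert T\right\vert^{2}\right)T^{-1}\right\vert T$. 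Once these are noted, both displays follow immediately.
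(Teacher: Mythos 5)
Your proposal is correct and is exactly the intended argument: the paper states Corollary \ref{c.2.3} without proof, and the evident route---checking that $t_{x}=\Vert Vx\Vert^{2}/\Vert Tx\Vert^{2}\in[m^{2},M^{2}]$ via $u=Tx/\Vert Tx\Vert$ and (\ref{mM'}), substituting $t=t_{x}$ into (\ref{e.2.11}), and then taking the inner product with the same $x$ so that the term $\Phi'(t_{x})(\Vert Vx\Vert^{2}-t_{x}\Vert Tx\Vert^{2})$ vanishes---is precisely the verbatim analogue of the proof of Corollary \ref{c.2.1} that you describe. Your remark about the $\Phi_{-}'(M)$ versus $\Phi_{-}'(M^{2})$ notational inconsistency is also a fair observation about the source.
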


If we take the integral mean in the interval $\left[ m^{2},M^{2}\right] $ of
the inequality (\ref{e.2.11}) we can also state the following result.

\begin{corollary}
\label{c.2.4}With the assumptions of Theorem \ref{t.2.2}, we have
\end{corollary}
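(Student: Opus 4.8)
The plan is to average the chain of inequalities (\ref{e.2.11}) over the parameter $t$ on $\left[m^{2},M^{2}\right]$, that is, to integrate in $t$ and divide by $M^{2}-m^{2}$. The one point needing justification is that this operation respects the operator order: if $t\mapsto A\left(t\right),B\left(t\right)$ are norm-continuous $\mathcal{B}\left(H\right)$-valued maps on $\left[m^{2},M^{2}\right]$ with $A\left(t\right)\leq B\left(t\right)$ for every $t$, then $\int_{m^{2}}^{M^{2}}A\left(t\right)dt\leq\int_{m^{2}}^{M^{2}}B\left(t\right)dt$; this is immediate upon testing against an arbitrary vector $x\in H$ and invoking monotonicity of the scalar Riemann integral. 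Since $\Phi$ is continuously differentiable on $\mathring{I}\supseteq\left[m^{2},M^{2}\right]$, every integrand that appears is continuous in $t$, so all the integrals exist.

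First I would apply this to the first inequality in (\ref{e.2.11}). The operators $\circledcirc_{\Phi'\ell}\left(V,T\right)$ and $\circledcirc_{\Phi'}\left(V,T\right)$ are independent of $t$ and so pass through the averaging unchanged, while $\frac{1}{M^{2}-m^{2}}\int_{m^{2}}^{M^{2}}t\,dt=\frac{m^{2}+M^{2}}{2}$ and $\frac{1}{M^{2}-m^{2}}\int_{m^{2}}^{M^{2}}\Phi\left(t\right)dt$ is just the integral mean of $\Phi$. This produces
\begin{equation*}
\circledcirc_{\Phi}\left(V,T\right)\leq\left(\frac{1}{M^{2}-m^{2}}\int_{m^{2}}^{M^{2}}\Phi\left(t\right)dt\right)\left\vert T\right\vert^{2}+\circledcirc_{\Phi'\ell}\left(V,T\right)-\frac{m^{2}+M^{2}}{2}\circledcirc_{\Phi'}\left(V,T\right).
\end{equation*}

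For the second, coarser bound in (\ref{e.2.11}) I would reuse the two integration-by-parts identities already worked out in the proof of Corollary \ref{c.2.2}, namely $\frac{1}{M^{2}-m^{2}}\int_{m^{2}}^{M^{2}}\Phi'\left(t\right)dt=\frac{\Phi\left(M^{2}\right)-\Phi\left(m^{2}\right)}{M^{2}-m^{2}}$ and $\frac{1}{M^{2}-m^{2}}\int_{m^{2}}^{M^{2}}t\Phi'\left(t\right)dt=\frac{M^{2}\Phi\left(M^{2}\right)-m^{2}\Phi\left(m^{2}\right)}{M^{2}-m^{2}}-\frac{1}{M^{2}-m^{2}}\int_{m^{2}}^{M^{2}}\Phi\left(t\right)dt$. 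The error term yields $\frac{1}{M^{2}-m^{2}}\int_{m^{2}}^{M^{2}}\circledcirc_{\left\vert\cdot\right\vert,t}\left(V,T\right)dt$, which one can either keep as it stands or put in closed form using $\left(T^{\ast}\right)^{-1}\left(\left\vert V\right\vert^{2}-t\left\vert T\right\vert^{2}\right)T^{-1}=\left\vert VT^{-1}\right\vert^{2}-t1_{H}$, the scalar evaluation $\int_{m^{2}}^{M^{2}}\left\vert x-t\right\vert dt=\frac{1}{2}\left[\left(x-m^{2}\right)^{2}+\left(M^{2}-x\right)^{2}\right]$ and the continuous functional calculus. Collecting the coefficients of $\left\vert T\right\vert^{2}$ in the same way as in Corollary \ref{c.2.2} then delivers the stated inequality.

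I do not anticipate any real obstacle: the only delicate point is legitimising the passage from a $t$-parametrised family of operator inequalities to an inequality between the corresponding integral means, and that is handled by moving to vector states; the rest is the routine bookkeeping needed when the $\left\vert T\right\vert^{2}$-terms are merged.
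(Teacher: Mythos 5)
Your proposal is correct and follows exactly the route the paper takes: the paper's entire justification for Corollary \ref{c.2.4} is the sentence preceding it, namely that one takes the integral mean over $\left[ m^{2},M^{2}\right]$ of the inequality (\ref{e.2.11}) and then evaluates $\int_{m^{2}}^{M^{2}}\varphi \left( t\right) dt$ and $\int_{m^{2}}^{M^{2}}t\varphi \left( t\right) dt$ as in the proof of Corollary \ref{c.2.2}. Your additional remarks (testing against vectors to pass the operator order through the integral, and the optional closed form for the averaged error term) only supply details the paper leaves implicit.
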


\begin{align}
& \circledcirc _{\Phi }\left( V,T\right)  \label{e.2.18} \\
& \leq \left( \frac{1}{M^{2}-m^{2}}\int_{m^{2}}^{M^{2}}\Phi \left( t\right)
dt\right) \left\vert T\right\vert ^{2}+\circledcirc _{\Phi ^{\prime }\ell
}\left( V,T\right) -\frac{m^{2}+M^{2}}{2}\circledcirc _{\Phi ^{\prime
}}\left( V,T\right)  \notag \\
& \leq 2\left( \frac{1}{M^{2}-m^{2}}\int_{m^{2}}^{M^{2}}\Phi \left( t\right)
dt\right) \left\vert T\right\vert ^{2}  \notag \\
& -\frac{1}{M^{2}-m^{2}}\left[ \Phi \left( M^{2}\right) \left(
M^{2}\left\vert T\right\vert ^{2}-\left\vert V\right\vert ^{2}\right) +\Phi
\left( m^{2}\right) \left( \left\vert V\right\vert ^{2}-m^{2}\left\vert
T\right\vert ^{2}\right) \right]  \notag \\
& +\left[ \Phi _{-}^{\prime }\left( M^{2}\right) -\Phi _{+}^{\prime }\left(
m^{2}\right) \right] \frac{1}{M^{2}-m^{2}}\int_{m^{2}}^{M^{2}}\circledcirc
_{\left\vert \cdot \right\vert ,t}\left( V,T\right) dt.  \notag
\end{align}

\section{Applications for Quadratic Weighted Geometric Mean}

For $x\not=y$ and $p\in \mathbb{R}\setminus \{-1,0\}$, we define the $p$-%
\textit{logarithmic mean} (\textit{generalized logarithmic mean}) $%
L_{p}(x,y) $ by 
\begin{equation*}
L_{p}(x,y):=\left[ \frac{y^{p+1}-x^{p+1}}{(p+1)(y-x)}\right] ^{1/p}.
\end{equation*}%
In fact the singularities at $p=-1,$ $0$ are removable and $L_{p}$ can be
defined for $p=-1,$ $0$ so as to make $L_{p}(x,y)$ a continuous function of $%
p$. In the limit as $p\rightarrow 0$ we obtain the \textit{identric mean }$%
I(x,y)$, given by 
\begin{equation}
I(x,y):=\frac{1}{e}\ \left( \frac{y^{y}}{x^{x}}\right) ^{1/(y-x)},
\label{Id}
\end{equation}%
and in the case $p\rightarrow -1$ the \textit{logarithmic mean} $L(x,y)$,
given by 
\begin{equation*}
L(x,y):=\frac{y-x}{\ln y-\ln x}.
\end{equation*}%
In each case we define the mean as $x$ when $y=x$, which occurs as the
limiting value of $L_{p}(x,y)$ for $y\rightarrow x$.

If we consider the continuous function $f_{\nu }:[0,\infty )\rightarrow
\lbrack 0,\infty )$, $f_{\nu }\left( t\right) =t^{\nu }$ then the \textit{%
quadratic} \textit{weighted operator geometric mean }can be interpreted as
the quadratic perspective $\circledcirc _{f_{\nu }}\left( B,A\right) $ of $%
T, $ $V\in \mathcal{B}^{-1}\left( H\right) $ and $f_{\nu },$ namely%
\begin{equation*}
\circledcirc _{f_{\nu }}\left( V,T\right) =T\circledS _{\nu }V.
\end{equation*}%
Consider the convex function $f=-f_{\nu }.$ Then by applying the
inequalities (\ref{e.2.1}) and (\ref{e.2.1.a})\ we have%
\begin{equation}
T\circledS _{\nu }V\leq \left( 1-\nu \right) t^{\nu }\left\vert T\right\vert
^{2}+\nu t^{\nu -1}\left\vert V\right\vert ^{2}=\left( t^{\nu }\left\vert
T\right\vert ^{2}\right) \nabla _{\nu }\left( t^{\nu -1}\left\vert
V\right\vert ^{2}\right) ,  \label{e.3.1}
\end{equation}%
for any $t>0$ and $\nu \in \left[ 0,1\right] ,$ and 
\begin{equation}
T\circledS _{\nu }V\leq \left( 1-\nu \right) \left( \frac{m^{2}+M^{2}}{2}%
\right) ^{\nu }\left\vert T\right\vert ^{2}+\nu \left( \frac{m^{2}+M^{2}}{2}%
\right) ^{\nu -1}\left\vert V\right\vert ^{2}  \label{e.3.2}
\end{equation}%
for any $\nu \in \left[ 0,1\right] ,$ provided either the condition (\ref{mM}%
), or, equivalently, the condition (\ref{mM'}) is valid.

From (\ref{e.2.6}) and (\ref{e.2.7}) we have for any $x\in H\setminus
\left\{ 0\right\} $ and $\nu \in \left[ 0,1\right] $ that%
\begin{equation}
T\circledS _{\nu }V\leq \left( 1-\nu \right) \left( \frac{\left\Vert
Vx\right\Vert ^{2}}{\left\Vert Tx\right\Vert ^{2}}\right) ^{\nu }\left\vert
T\right\vert ^{2}+\nu \left( \frac{\left\Vert Tx\right\Vert ^{2}}{\left\Vert
Vx\right\Vert ^{2}}\right) ^{1-\nu }\left\vert V\right\vert ^{2}
\label{e.3.3}
\end{equation}%
and 
\begin{equation}
\left\langle T\circledS _{\nu }Vx,x\right\rangle \leq \left\Vert
Tx\right\Vert ^{2\left( 1-\nu \right) }\left\Vert Vx\right\Vert ^{2\nu },
\label{e.3.4}
\end{equation}%
for any $\nu \in \left[ 0,1\right] .$

The inequality (\ref{e.1.6}) can be written as 
\begin{equation}
\left\langle T\circledS _{\nu }Vx,x\right\rangle \leq \left( 1-\nu \right)
\left\Vert Tx\right\Vert ^{2}+\nu \left\Vert Vx\right\Vert ^{2}  \label{YY}
\end{equation}%
for any $x\in H.$

By utilizing the scalar arithmetic mean-geometric mean inequality we also
have 
\begin{equation}
\left\Vert Tx\right\Vert ^{2\left( 1-\nu \right) }\left\Vert Vx\right\Vert
^{2\nu }\leq \left( 1-\nu \right) \left\Vert Tx\right\Vert ^{2}+\nu
\left\Vert Vx\right\Vert ^{2}  \label{e.3.4.a}
\end{equation}%
for any $x\in H.$

Therefore by (\ref{e.3.4}) and (\ref{e.3.4.a}) we have the following vector
inequality improving (\ref{YY})%
\begin{equation}
\left\langle T\circledS _{\nu }Vx,x\right\rangle \leq \left\Vert
Tx\right\Vert ^{2\left( 1-\nu \right) }\left\Vert Vx\right\Vert ^{2\nu }\leq
\left( 1-\nu \right) \left\Vert Tx\right\Vert ^{2}+\nu \left\Vert
Vx\right\Vert ^{2}  \label{e.3.4.b}
\end{equation}%
for any $x\in H.$

From (\ref{e.2.9}) we have%
\begin{align}
T\circledS _{\nu }V& \leq 2L_{\nu }^{\nu }(m^{2},M^{2})\left\vert
T\right\vert ^{2}  \label{e.3.5} \\
& -\frac{1}{M^{2}-m^{2}}\left[ M^{2\nu }\left( M^{2}\left\vert T\right\vert
^{2}-\left\vert V\right\vert ^{2}\right) +m^{2\nu }\left( \left\vert
V\right\vert ^{2}-m^{2}\left\vert T\right\vert ^{2}\right) \right]  \notag
\end{align}%
for any $\nu \in \left( 0,1\right) ,$ provided either the condition (\ref{mM}%
), or, equivalently, the condition (\ref{mM'}) is valid.

If $T,$ $V\in \mathcal{B}^{-1}\left( H\right) $ satisfy the condition (\ref%
{mM}), then by (\ref{e.2.11.a}) we have%
\begin{align}
T\circledS _{\nu }V& \geq \left( \frac{m^{2}+M^{2}}{2}\right) ^{\nu
}\left\vert T\right\vert ^{2}+\nu T\circledS _{\nu }V-\nu \frac{m^{2}+M^{2}}{%
2}T\circledS _{\nu -1}V  \label{e.3.7} \\
& \geq \left( 1-\nu \right) \left( \frac{m^{2}+M^{2}}{2}\right) ^{\nu
}\left\vert T\right\vert ^{2}+\nu \left( \frac{m^{2}+M^{2}}{2}\right) ^{\nu
-1}\left\vert V\right\vert ^{2}  \notag \\
& +\nu \left( M^{2\left( \nu -1\right) }-m^{2\left( \nu -1\right) }\right)
\circledcirc _{\left\vert \cdot \right\vert ,\frac{m+M}{2}}\left( V,T\right)
\notag \\
& \geq \left( 1-\nu \right) \left( \frac{m^{2}+M^{2}}{2}\right) ^{\nu
}\left\vert T\right\vert ^{2}+\nu \left( \frac{m^{2}+M^{2}}{2}\right) ^{\nu
-1}\left\vert V\right\vert ^{2}  \notag \\
& +\frac{1}{2}\nu \left( M^{2}-m^{2}\right) \left( M^{2\left( \nu -1\right)
}-m^{2\left( \nu -1\right) }\right) .  \notag
\end{align}%
From the last inequality in (\ref{e.3.7}) we get\ 
\begin{align}
& \frac{1}{2}\nu \left( M^{2}-m^{2}\right) \left( \frac{M^{2\left( 1-\nu
\right) }-m^{2\left( 1-\nu \right) }}{m^{2\left( 1-\nu \right) }M^{2\left(
1-\nu \right) }}\right)  \label{e.3.8} \\
& \geq \left( 1-\nu \right) \left( \frac{m^{2}+M^{2}}{2}\right) ^{\nu
}\left\vert T\right\vert ^{2}+\nu \left( \frac{m^{2}+M^{2}}{2}\right) ^{\nu
-1}\left\vert V\right\vert ^{2}-T\circledS _{\nu }V\geq 0,  \notag
\end{align}%
for any $\nu \in \left[ 0,1\right] ,$ which provides a simple reverse for (%
\ref{e.3.2}).

\section{Applications for Quadratic Relative Operator Entropy}

Consider the logarithmic function $\ln .$ Then the quadratic relative
operator entropy can be interpreted as the perspective of $\ln $, namely%
\begin{equation*}
\circledcirc _{\ln }\left( V,T\right) =\odot \left( T|V\right) :=T^{\ast
}\ln \left( \left\vert VT^{-1}\right\vert ^{2}\right) T
\end{equation*}%
provided $T,$ $V\in \mathcal{B}^{-1}\left( H\right) .$

If we use the inequalities (\ref{e.2.1}) and (\ref{e.2.1.a}) for the convex
function $f=-\ln $ we have%
\begin{equation}
\odot \left( T|V\right) \leq \left( \ln t\right) \left\vert T\right\vert
^{2}-\left\vert T\right\vert ^{2}+t^{-1}\left\vert V\right\vert ^{2},
\label{e.6.2}
\end{equation}%
for any $t>0$ and $T,$ $V\in \mathcal{B}^{-1}\left( H\right) .$

In particular, if $T,$ $V$ satisfy the condition (\ref{mM}), then 
\begin{align}
\odot \left( T|V\right) & \leq \left[ \ln \left( \frac{m^{2}+M^{2}}{2}%
\right) \right] \left\vert T\right\vert ^{2}  \label{e.6.3} \\
& +\left( \frac{m^{2}+M^{2}}{2}\right) ^{-1}\left( \left\vert V\right\vert
^{2}-\frac{m^{2}+M^{2}}{2}\left\vert T\right\vert ^{2}\right) .  \notag
\end{align}%
From the inequalities (\ref{e.2.6}) and (\ref{e.2.7}) we have%
\begin{equation}
\odot \left( T|V\right) \leq \ln \left( \frac{\left\Vert Vx\right\Vert ^{2}}{%
\left\Vert Tx\right\Vert ^{2}}\right) \left\vert T\right\vert ^{2}+\frac{%
\left\Vert Tx\right\Vert ^{2}}{\left\Vert Vx\right\Vert ^{2}}\left\vert
V\right\vert ^{2}-\left\vert T\right\vert ^{2}  \label{e.6.4}
\end{equation}%
and 
\begin{equation}
\left\langle \odot \left( T|V\right) x,x\right\rangle \leq \left\Vert
Tx\right\Vert ^{2}\ln \left( \frac{\left\Vert Vx\right\Vert ^{2}}{\left\Vert
Tx\right\Vert ^{2}}\right) ,  \label{e.6.5}
\end{equation}%
for any $x\in H,$ $x\neq 0.$

The following inequality for the relative operator entropy is known, see (%
\ref{e.1.11})%
\begin{equation}
\odot \left( T|V\right) \leq \left\vert V\right\vert ^{2}-\left\vert
T\right\vert ^{2}  \label{e.6.5.a}
\end{equation}%
for any $T,$ $V\in \mathcal{B}^{-1}\left( H\right) .$

This inequality is equivalent to 
\begin{equation}
\left\langle \odot \left( T|V\right) x,x\right\rangle \leq \left\Vert
Vx\right\Vert ^{2}-\left\Vert Tx\right\Vert ^{2}  \label{e.6.5.b}
\end{equation}%
for any $x\in H.$

We know the following elementary inequality that holds for the logarithm%
\begin{equation*}
\ln t\leq t-1\text{ for any\textit{\ }}t>0.
\end{equation*}%
If we take in this inequality $t=\frac{\left\Vert Vx\right\Vert ^{2}}{%
\left\Vert Tx\right\Vert ^{2}}>0,$ $x\in H,$ $x\neq 0$ and multiply with $%
\left\Vert Tx\right\Vert ^{2}>0,$ then we get%
\begin{equation}
\left\Vert Tx\right\Vert ^{2}\ln \left( \frac{\left\Vert Vx\right\Vert ^{2}}{%
\left\Vert Tx\right\Vert ^{2}}\right) \leq \left\Vert Vx\right\Vert
^{2}-\left\Vert Tx\right\Vert ^{2}  \label{e.6.5.c}
\end{equation}%
for any $x\in H,$ $x\neq 0.$

Therefore, by (\ref{e.6.5}) and (\ref{e.6.5.c}) we have 
\begin{equation*}
\left\langle \odot \left( T|V\right) x,x\right\rangle \leq \left\Vert
Tx\right\Vert ^{2}\ln \left( \frac{\left\Vert Vx\right\Vert ^{2}}{\left\Vert
Tx\right\Vert ^{2}}\right) \leq \left\Vert Vx\right\Vert ^{2}-\left\Vert
Tx\right\Vert ^{2}
\end{equation*}%
for any $x\in H,$ $x\neq 0$ that is an improvement of (\ref{e.6.5.b}).

From (\ref{e.2.9}) we also have 
\begin{align}
\odot \left( T|V\right) & \leq 2\left[ \ln I\left( m^{2},M^{2}\right) \right]
\left\vert T\right\vert ^{2}  \label{e.6.6} \\
& -\frac{1}{M^{2}-m^{2}}\left[ \ln M^{2}\left( M^{2}\left\vert T\right\vert
^{2}-\left\vert V\right\vert ^{2}\right) +\ln m^{2}\left( \left\vert
V\right\vert ^{2}-m^{2}\left\vert T\right\vert ^{2}\right) \right] ,  \notag
\end{align}%
where $I\left( \cdot ,\cdot \right) $ is the identric mean defined in (\ref%
{Id}) and 
\begin{equation*}
\frac{1}{M^{2}-m^{2}}\int_{m^{2}}^{M^{2}}\ln tdt=\ln I\left(
m^{2},M^{2}\right) .
\end{equation*}

From (\ref{e.2.11.a}) we also have%
\begin{align}
& \odot \left( T|V\right)  \label{e.6.8} \\
& \geq \left[ \ln \left( \frac{m^{2}+M^{2}}{2}\right) \right] \left\vert
T\right\vert ^{2}+\left\vert T\right\vert ^{2}-\frac{m^{2}+M^{2}}{2}%
\left\vert T\right\vert ^{2}\left\vert V^{\ast }\right\vert ^{-2}\left\vert
T\right\vert ^{2}  \notag \\
& \geq \left[ \ln \left( \frac{m^{2}+M^{2}}{2}\right) \right] \left\vert
T\right\vert ^{2}+\left( \frac{m^{2}+M^{2}}{2}\right) ^{-1}\left( \left\vert
V\right\vert ^{2}-\frac{m^{2}+M^{2}}{2}\left\vert T\right\vert ^{2}\right) 
\notag \\
& -\frac{M^{2}-m^{2}}{m^{2}M^{2}}\circledcirc _{\left\vert \cdot \right\vert
,\frac{m+M}{2}}\left( T|V\right)  \notag \\
& \geq \left[ \ln \left( \frac{m^{2}+M^{2}}{2}\right) \right] \left\vert
T\right\vert ^{2}+\left( \frac{m^{2}+M^{2}}{2}\right) ^{-1}\left( \left\vert
V\right\vert ^{2}-\frac{m^{2}+M^{2}}{2}\left\vert T\right\vert ^{2}\right) 
\notag \\
& -\frac{1}{2}\frac{\left( M^{2}-m^{2}\right) ^{2}}{m^{2}M^{2}},  \notag
\end{align}%
provided $T,$ $V\in \mathcal{B}^{-1}\left( H\right) $ satisfying the
condition (\ref{mM}).

From the last part of (\ref{e.6.8}) we get%
\begin{align}
\frac{1}{2}\frac{\left( M^{2}-m^{2}\right) ^{2}}{m^{2}M^{2}}& \geq \left[
\ln \left( \frac{m^{2}+M^{2}}{2}\right) \right] \left\vert T\right\vert ^{2}
\label{e.6.9} \\
& +\left( \frac{m^{2}+M^{2}}{2}\right) ^{-1}\left( \left\vert V\right\vert
^{2}-\frac{m^{2}+M^{2}}{2}\left\vert T\right\vert ^{2}\right) -\odot \left(
T|V\right)  \notag \\
& \geq 0  \notag
\end{align}%
that provides a simple reverse of (\ref{e.6.3}).

If one considers the convex function $f\left( t\right) =t\ln t$ for $t>0,$
that one can get other logarithmic inequalities as above. The details are
left to the interested reader.

\end{document}